\numberwithin{equation}{section}
\newtheorem{proposition}{Proposition}[section]
\newtheorem{theorem}[proposition]{Theorem}
\newtheorem{corollary}[proposition]{Corollary}
\theoremstyle{definition}
\newtheorem{remark}[proposition]{Remark}
\DeclareMathOperator{\Bl}{Bl}
\DeclareMathOperator{\GL}{GL}
\DeclareMathOperator{\Aut}{Aut}
\DeclareMathOperator{\Gr}{Gr}
\def\R{\mathbb{R}}
\def\Z{\mathbb{Z}}
\def\P{\mathbb{P}}
\def\C{\mathbb{C}}
\def\cE{\mathcal{E}}
\def\cO{\mathcal{O}}
\def\cN{\mathcal{N}}
\def\cS{\mathcal{S}}
\def\cK{\mathcal{K}}
\def\cB{\mathcal{B}}
\def\cC{\mathcal{C}}
\def\cS{\mathcal{S}}
\def\Gr{\mathrm{Gr}}
\def\delb{\overline{\partial}}
\def\del{\partial}
\def\Aut{\mathrm{Aut}}
\def\Ad{\mathrm{Ad}}
\def\Fut{\mathrm{Fut}}
\def\PGL{\mathrm{PGL}}
\def\aut{\mathfrak{aut}}
\def\sl{\mathfrak{sl}}
\def\gl{\mathfrak{gl}}
\def\om{\omega}
\def\ep{\varepsilon}
\def\>{\rangle}
\def\<{\langle}
\def\>{\rangle}
\title[The Futaki invariant of Fano threefolds]{On the Futaki invariant of Fano threefolds}
\author[Lars Martin Sektnan and Carl Tipler]{Lars Martin Sektnan and Carl Tipler}
\address{Lars Martin Sektnan, Department of Mathematical Sciences, University of Gothenburg, 412 96 Gothenburg, Sweden}
\email{sektnan@chalmers.se}
\address{Carl Tipler, Univ Brest, UMR CNRS 6205, Laboratoire de Mathématiques de Bretagne
Atlantique, France}
\email{Carl.Tipler@univ-brest.fr}
\begin{document}

\begin{abstract} 
We study the zero locus of the Futaki invariant on $K$-polystable Fano threefolds, seen as a map from the K\"ahler cone to the dual of the Lie algebra of the reduced automorphism group. We show that, apart from families $3.9, 3.13, 3.19, 3.20, 4.2, 4.4, 4.7$ and $5.3 $ of the Iskovskikh--Mori--Mukai classification of Fano threefolds, the Futaki invariant of such manifolds vanishes identically on their K\"ahler cone. In all cases, when the Picard rank is greater or equal to two, we exhibit explicit $2$-dimensional differentiable families of K\"ahler classes containing the anti-canonical class and on which the Futaki invariant is identically zero. As a corollary, we deduce the existence of non K\"ahler--Einstein cscK metrics on all such Fano threefolds.
\end{abstract}

\maketitle

\section{Introduction} 
The Futaki invariant was introduced by Akito Futaki (\cite{Futaki83,FutBook}) as an obstruction to the existence of  K\"ahler--Einstein metrics on Fano manifolds. Its definition extends to any compact polarised K\"ahler manifold, and its vanishing is a necessary condition for the existence of a constant scalar curvature K\"ahler metric (cscK for short) in a given K\"ahler class. 

In this note, we study the zero locus of the Futaki invariant, seen as a map from the K\"ahler cone to the dual of the Lie algebra of the reduced automorphism group (see Section \ref{sec:prelim} for the definitions). This locus is fully understood for Fano surfaces from the works \cite{TianYau87,Tian90,WangZhou}, which we recall in Section \ref{sec:fanosurfaces}. Here we will focus on $K$-polystable Fano threefolds. The description of this class of manifolds has seen recently great progress, in particular with \cite{fanothreefolds} (see also references therein). 

Relying on a case by case analysis, our little contribution to Fanography is the following :
\begin{theorem}
 \label{theo:intro}
 Let $(X,-K_X)$  be a $K$-polystable Fano threefold that belongs to family N\textdegree $\cN$, with 
$$\cN\notin\lbrace 3.9, 3.13, 3.19, 3.20, 4.2,  4.4, 4.7, 5.3 \rbrace.$$ 
 Then, the Futaki invariant of $X$ vanishes identically on its K\"ahler cone. 
\end{theorem}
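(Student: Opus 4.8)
The plan is to convert the assertion into a statement about the structure of the reduced automorphism group, and then to verify that statement family by family.

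First I would recall the two classical facts that make the Futaki invariant rigid. For a fixed K\"ahler class $[\om]$, the map $\Fut_{[\om]}\colon \aut(X)\to\C$ is a Lie algebra character, i.e.\ a morphism into the abelian Lie algebra $\C$; consequently it annihilates the derived subalgebra $[\aut(X),\aut(X)]$. Moreover, since $X$ is Fano we have $H^1(X,\cO_X)=0$, so every holomorphic vector field vanishes somewhere and $\aut(X)=H^0(X,TX)$ is already the Lie algebra of the reduced automorphism group. Because $X$ is $K$-polystable, $\Aut_0(X)$ is reductive, so we may split
\[
\aut(X)=\mathfrak{z}\oplus[\aut(X),\aut(X)],
\]
where $\mathfrak{z}$ is the center. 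Hence $\Fut_{[\om]}$ is completely determined by its restriction to $\mathfrak{z}$, and if $\mathfrak{z}=0$ --- equivalently, if $\Aut_0(X)$ is finite or semisimple --- then $\Fut_{[\om]}\equiv 0$ on the whole K\"ahler cone, which is exactly what is claimed.

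The theorem therefore reduces to the following group-theoretic statement: for every $K$-polystable member of a family $\cN\notin\lbrace 3.9, 3.13, 3.19, 3.20, 4.2, 4.4, 4.7, 5.3\rbrace$, the center $\mathfrak{z}$ of $\aut(X)$ is trivial. I would prove this by running through the Iskovskikh--Mori--Mukai list, reading off the automorphism groups from the tables in \cite{fanothreefolds} and retaining only the $K$-polystable members. For Picard rank one the groups occurring are finite or semisimple, so the case is immediate; for higher Picard rank one checks that, outside the eight exceptional families, $\Aut_0(X)$ is again either finite or semisimple, whence $\mathfrak{z}=0$ and the previous paragraph applies.

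I expect the case-by-case verification to be the main obstacle, for two reasons. First, the automorphism group is not locally constant in moduli: it jumps on special loci, so one must isolate exactly the $K$-polystable members of each family and determine their groups, rather than the group of a generic member. Second, among families with positive-dimensional automorphism group one must carefully distinguish a genuinely semisimple group from a reductive group with a nontrivial central torus, since only the latter can support a nonzero Futaki invariant. The eight excluded families are precisely the $K$-polystable Fano threefolds whose reduced automorphism group is reductive with nontrivial center --- for instance $5.3=\P^1\times S$, where $S$ is the degree six del Pezzo surface and the central two-torus of $\Aut_0(S)$ survives in $\Aut_0(X)$ --- and it is on these that the Futaki invariant genuinely varies over the K\"ahler cone.
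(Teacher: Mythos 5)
There is a genuine gap: the reduction to ``the centre of $\aut(X)$ is trivial'' is false for most of the families the theorem covers. The character argument you describe (vanishing on $[\aut(X),\aut(X)]$, hence on all of $\aut(X)$ when $\Aut_0(X)$ is finite or semisimple) only disposes of the families where $\aut(X)\simeq\mathfrak{sl}_n(\C)$ or a sum of such --- namely $2.27, 2.32, 2.34, 3.17, 3.27, 4.6, 6.1, 7.1, 8.1, 9.1, 10.1$ and some members of $2.21$ and $3.13$. But many families \emph{outside} the excluded set have reductive automorphism group with nontrivial centre: for instance $2.20$, $2.22$, $3.5$, $3.12$, $3.15$, $4.3$, $4.13$ and $5.1$ have $\Aut_0(X)\simeq\C^*$, family $2.24$ and one member of $3.10$ have $\Aut_0(X)\simeq(\C^*)^2$, family $2.29$ has $\Aut_0(X)\simeq\C^*\times\PGL_2(\C)$, and family $3.25$ has $\Aut_0(X)\simeq(\GL_2(\C)\times\GL_2(\C))/\C^*$. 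For all of these, $\mathfrak{z}\neq 0$ and your argument gives no information on the central directions, so the case-by-case verification you defer to would simply fail. Your closing claim that the eight excluded families are ``precisely'' those with nontrivial central torus is therefore also incorrect; the dichotomy is not group-theoretic in the way you describe.

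The missing idea is the $\Ad$-invariance of the Futaki invariant under the full (disconnected) automorphism group: if $\tau\in\Aut(X)$ satisfies $\tau^*\Omega=\Omega$ and $\Ad_\tau(v)=c\cdot v$ with $c\neq 1$, then $\Fut_{(X,\Omega)}(v)=c\cdot\Fut_{(X,\Omega)}(v)$, forcing $\Fut_{(X,\Omega)}(v)=0$. Each of the families above carries an explicit involution (typically a coordinate swap on the ambient projective space or product, preserving the blown-up locus) that inverts the generator of the central torus and fixes \emph{every} $(1,1)$-class; this is what kills the Futaki invariant on the centre across the whole K\"ahler cone. The eight excluded families are exactly those where the available involutions only preserve a proper subcone of K\"ahler classes (e.g.\ because they swap two exceptional divisors with independent classes), which is why the conclusion there is weaker. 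Without this symmetry argument your proof covers only the semisimple and finite cases.
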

Note that when $\Aut(X)$ is finite or when the Picard rank $\rho(X)=1$, the Futaki invariant vanishes identically on the K\"ahler cone, as soon as $X$ is $K$-polystable in the second case. From the classification in \cite{fanothreefolds}, there exists $33$ families of Fano threefolds with $\rho(X)\geq 2$ that admit members which are $K$-polystable with respect to the anti-canonical polarisation and which have infinite automorphism group. We verify that of these, only $8$ families might have members with classes on which the Futaki invariant does not vanish. Further, for these $8$ families, we provide explicit $2$-dimensional families of K\"ahler classes that contain $c_1(X)$ and on which the Futaki invariant vanishes. 
\begin{theorem}
 \label{theo:intro2}
 Let $(X,-K_X)$  be a $K$-polystable Fano threefold that belongs to family N\textdegree $\cN$, with 
$$\cN\in\lbrace 3.9, 3.13, 3.19, 3.20, 4.2,  4.4, 4.7, 5.3 \rbrace.$$ 
 Then, there is at least a 2-dimensional family of K\"ahler classes on $X$, containing $c_1(X)$, where the Futaki invariant vanishes.
\end{theorem}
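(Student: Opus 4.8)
The plan is to reduce the vanishing of the Futaki invariant to a small number of explicit polynomial conditions on the K\"ahler cone and then argue by a dimension count. First recall that $\Fut_\alpha$ is a character of the Lie algebra $\aut(X)$ of the reduced automorphism group, i.e. a Lie algebra homomorphism to the abelian algebra $\C$; any such character vanishes identically on the derived subalgebra $[\aut(X),\aut(X)]$. Since $X$ is Fano we have $b_1(X)=0$, so the reduced automorphism group coincides with $\Aut_0(X)$ and $\aut_r(X)=\aut(X)$. Moreover $X$ is $K$-polystable, hence carries a K\"ahler--Einstein metric in $c_1(X)$ by the solution of the Yau--Tian--Donaldson conjecture for Fano manifolds, so $\aut(X)$ is reductive by Matsushima's theorem and splits as $\aut(X)=\mathfrak{z}\oplus[\aut(X),\aut(X)]$ with $\mathfrak{z}$ its centre. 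Consequently $\Fut_\alpha$ is determined by its restriction to $\mathfrak{z}$, and the condition $\Fut_\alpha=0$ reduces to $s:=\dim_\C\mathfrak{z}$ scalar equations $\Fut_\alpha(v_1)=\dots=\Fut_\alpha(v_s)=0$ for a basis $v_1,\dots,v_s$ of $\mathfrak{z}$.

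Next I would make these equations explicit. For a fixed holomorphic vector field $v$, the assignment $\alpha\mapsto\Fut_\alpha(v)$ is metric independent and is a polynomial in $\alpha\in H^{1,1}(X,\R)$ (homogeneous of degree $\dim_\C X=3$ in the usual normalisation), computable from the intersection form, $c_1(X)$ and the moment map data of $v$ via the standard cohomological formula for the Futaki invariant. Using a basis of $H^{1,1}(X,\R)$ adapted to each family N\textdegree $\cN$ together with the intersection numbers recorded in \cite{fanothreefolds}, this produces, for each of the eight families, $s$ explicit polynomial equations $P_1,\dots,P_s$ cutting out the zero locus $Z$ of $\Fut$ inside the $\rho(X)$-dimensional K\"ahler cone $\mathcal{K}$.

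The key geometric input is that $c_1(X)$ already lies in $Z$: the K\"ahler--Einstein metric is cscK, so $\Fut_{c_1(X)}=0$. Thus $Z$ is a nonempty algebraic subset of $\mathcal{K}$ defined by $s$ equations and passing through the interior point $c_1(X)$. By the standard bound on the codimension of the common zero set of $s$ functions (each equation lowers the dimension by at most one), the local dimension of $Z$ at $c_1(X)$ is at least $\rho(X)-s$. For each of the eight families I would identify $\Aut(X)$ and compute $s=\dim_\C\mathfrak{z}$, verifying the inequality $s\le\rho(X)-2$; since $\rho(X)\ge3$ throughout and the centre turns out to be small (one dimensional in the Picard rank three cases N\textdegree $3.9,3.13,3.19,3.20$), this holds in every case. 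Hence the component of $Z$ through $c_1(X)$ has dimension at least two, and restricting to a smooth slice — or, when the differential of $\alpha\mapsto(\Fut_\alpha(v_1),\dots,\Fut_\alpha(v_s))$ at $c_1(X)$ is surjective, taking $Z$ itself near $c_1(X)$, which is then a smooth submanifold of dimension $\rho(X)-s$ — yields the desired $2$-dimensional differentiable family of K\"ahler classes containing $c_1(X)$.

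I expect the main obstacle to be the case-by-case part: identifying the reduced automorphism group and its centre for each of the eight families (in order to pin down $s$ and confirm $s\le\rho(X)-2$), and writing down the Futaki polynomials explicitly from the intersection theory of each threefold, which requires a concrete model (projective bundle, blow-up, or divisor) and a usable basis of $H^{1,1}(X,\R)$. A secondary technical point is the smoothness of the family at $c_1(X)$: if the Futaki map fails to be a submersion there one cannot simply invoke the implicit function theorem, but the dimension bound above still guarantees a two dimensional component through $c_1(X)$ from which a differentiable family can be extracted.
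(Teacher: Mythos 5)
Your reduction to the centre of $\aut(X)$ is sound and matches the paper's starting point, and your numerology $s=\dim_\C\mathfrak{z}\le\rho(X)-2$ does check out for all eight families. The pivotal step, however --- ``each equation lowers the dimension by at most one, hence the local dimension of $Z$ at $c_1(X)$ is at least $\rho(X)-s$'' --- is not a valid deduction in this setting. The K\"ahler cone is an open subset of the \emph{real} vector space $H^{1,1}(X,\R)$, and for $v_i$ generating the compact torus inside the centre the functions $\alpha\mapsto\Fut_\alpha(v_i)$ are real-valued real polynomials. The ``standard bound'' on codimension (Krull's principal ideal theorem) is a statement over algebraically closed fields; over $\R$ a single polynomial equation can cut out arbitrarily high codimension ($x^2+y^2=0$ defines a point in $\R^2$). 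Homogeneity of $\Fut_\alpha$ in $\alpha$ only guarantees that $Z$ contains the ray $\R_{>0}\,c_1(X)$, and an example such as $P(x,y,z)=(x^2+y^2)z$ shows that a single homogeneous real cubic can vanish, near a ray, exactly on that ray. So without further input your argument only produces the $1$-dimensional family of rescalings of $c_1(X)$, which already follows from $K$-polystability. The submersion alternative you mention would close the gap, but verifying surjectivity of the differential of $\alpha\mapsto(\Fut_\alpha(v_1),\dots,\Fut_\alpha(v_s))$ at $c_1(X)$ requires precisely the explicit intersection-theoretic computations you have deferred; as written, the proof is incomplete at its key step.

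The paper sidesteps this entirely by a symmetry argument: for each of the eight families it exhibits an involution $\tau\in\Aut(X)$ whose adjoint action sends a generator $v$ of the central $\C^*$ (or of each central factor) to $-v$, together with explicit $\tau$-invariant classes of the form $c_1(X)+\ep\, c_1(D)+\delta\, c_1(D')$, where $D,D'$ are sums of divisors preserved or swapped by $\tau$; the $\Ad$-invariance of the Futaki character (Proposition \ref{prop:AdinvarianceFutCar}) then forces $\Fut(v)=\Fut(\Ad_\tau v)=-\Fut(v)=0$ on all such classes, and for family 5.3 the explicit zero locus on the degree six del Pezzo surface is quoted from Wang--Zhou. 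If you wish to keep your strategy, you would need either to carry out the explicit Futaki polynomial computations and verify nondegeneracy at $c_1(X)$, or to supply some additional structural input (such as the discrete symmetries above) that controls the real zero locus.
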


From the LeBrun--Simanca openness theorem (\cite{LeSim93}), we deduce the following corollary.
\begin{corollary}
 \label{cor:intro}
 Let $X$ be a $K$-polystable Fano threefold with Picard rank $\rho(X)\geq 2$. Then $X$ admits a $2$-dimensional family of cscK metrics parametrised by a $2$-dimensional family of K\"ahler classes containing $c_1(X)$.
\end{corollary}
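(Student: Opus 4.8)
The plan is to deduce Corollary \ref{cor:intro} directly from Theorems \ref{theo:intro} and \ref{theo:intro2} together with the LeBrun--Simanca openness theorem. The key observation is that by the previous two theorems, \emph{every} $K$-polystable Fano threefold $X$ with Picard rank $\rho(X)\geq 2$ admits at least a $2$-dimensional family of K\"ahler classes containing $c_1(X)$ on which the Futaki invariant vanishes identically: if $X$ belongs to one of the generic families the Futaki invariant vanishes on the whole K\"ahler cone (which has dimension $\rho(X)\geq 2$), and if $X$ lies in one of the eight exceptional families $3.9, 3.13, 3.19, 3.20, 4.2, 4.4, 4.7, 5.3$, Theorem \ref{theo:intro2} provides such a $2$-dimensional family explicitly. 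Thus in all cases we have a $2$-dimensional differentiable family of classes, say $\lbrace [\om_t]\rbrace_{t\in U}$ parametrised by an open $U\subset\R^2$ with $0\in U$ and $[\om_0]=c_1(X)$, on which $\Fut$ is zero.

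The second ingredient is the existence of a cscK metric in the anti-canonical class itself. Since $X$ is $K$-polystable and Fano, by the solution of the Yau--Tian--Donaldson conjecture in the Fano setting $X$ admits a K\"ahler--Einstein metric $\om_0$ in $c_1(X)$; this is in particular a cscK metric. I would then invoke the LeBrun--Simanca openness theorem \cite{LeSim93}, which asserts that the existence of a cscK metric is an open condition on the K\"ahler class \emph{provided} the Futaki invariant vanishes along the deformation (more precisely, provided one can keep the scalar curvature equation solvable, for which the vanishing of the Futaki obstruction on nearby classes is exactly what is needed). Applying openness along the $2$-dimensional family $\lbrace [\om_t]\rbrace$, on which $\Fut\equiv 0$, one obtains, after possibly shrinking $U$, a cscK metric in each class $[\om_t]$ for $t$ in a neighbourhood of $0$, depending smoothly on $t$. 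This yields the desired $2$-dimensional family of cscK metrics parametrised by a $2$-dimensional family of K\"ahler classes containing $c_1(X)$.

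The main subtlety, and the step I would treat most carefully, is the precise hypothesis under which LeBrun--Simanca openness applies. The naive statement of openness for cscK metrics requires that the reduced automorphism group act trivially, i.e. that there be no holomorphic vector fields, so that the relevant linearised operator is invertible; in the presence of a nontrivial reduced automorphism group the obstruction to deforming a cscK metric to a nearby class is precisely the Futaki invariant (viewed via the moment-map/Futaki-character picture). Here $\Aut(X)$ is infinite, so this obstruction is genuinely present, and the point is that the $2$-dimensional families produced above are chosen exactly so that the Futaki invariant vanishes on them, which is what unlocks the openness argument in the equivariant setting. I would therefore phrase the application as: the cscK equation can be solved in all nearby classes within the zero locus of $\Fut$, and since our families lie in this zero locus, the family of cscK metrics exists. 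The only remaining routine point is that the family of classes is genuinely $2$-dimensional and differentiable, which is guaranteed by construction in both Theorem \ref{theo:intro} (where the full $\rho(X)$-dimensional cone works) and Theorem \ref{theo:intro2}.
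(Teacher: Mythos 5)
Your proposal is correct and follows essentially the same route as the paper: start from the K\"ahler--Einstein metric in $c_1(X)$ guaranteed by $K$-polystability, take the $2$-dimensional Futaki-vanishing families from Theorems \ref{theo:intro} and \ref{theo:intro2}, and conclude by LeBrun--Simanca openness. The only cosmetic difference is that the paper invokes the openness of the set of classes admitting \emph{extremal} metrics and then uses Calabi's characterisation of cscK metrics as extremal metrics with vanishing Futaki invariant, whereas you phrase the same content as ``cscK solvability is open within the zero locus of $\Fut$''; your discussion of why the naive cscK openness fails in the presence of holomorphic vector fields is exactly the point this reformulation handles.
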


\begin{remark}
For $K$-polystable members of the families $\cN\in\lbrace  4.2, 4.4, 4.7 \rbrace$ with infinite automorphism group, we actually show that there is a $3$-dimensional family of K\"ahler classes near $-K_X$ with vanishing Futaki invariant. 
\end{remark}

\begin{remark}
 Our results should be compared with the recent \cite{OSY}, where the Futaki invariant of Bott manifolds is studied. In contrast to our results, which guarantee the vanishing of the Futaki invariant in many cases, it is shown in \cite{OSY} that the only Bott manifolds for which the Futaki invariant vanishes on the whole K\"ahler cone are isomorphic to products of projective lines. The key observation to prove our results is the existence of enough discrete symmetries that preserve every K\"ahler class on Fano threefolds, which in the majority of the cases considered here will be responsible for the vanishing of the Futaki invariant. 
\end{remark}

\subsection*{Acknowledgments}  
The authors would like to thank Hendrik S\"u{\ss} for kindly answering our questions on complexity one Fano threefolds. 
 CT is partially supported by the grants MARGE ANR-21-CE40-0011 and BRIDGES ANR--FAPESP ANR-21-CE40-0017. LMS is funded by a Marie Sk\l{}odowska-Curie Individual Fellowship, funded from the European Union's Horizon 2020 research and innovation programme under grant agreement No 101028041.
 
\subsection*{Notations and conventions} Throughout the paper, for a compact K\"ahler manifold $X$, we will denote by $\Aut(X)$ (respectively $\Aut_0(X)$) its automorphism group  (respectively the connected component of the identity of the reduced automorphism group of $X$), and by $\aut(X)$ the Lie algebra of $\Aut(X)$. If $Z\subset X$ is a subvariety (not necessarily connected), $\Aut(X,Z)$ stands for elements in $\Aut(X)$ that leave $Z$ globally invariant. We denote by $\cK_X$ the K\"ahler cone of $X$. We will identify a divisor $D$ with $\cO(D)$, and use the notation $c_1(D)$ for its first Chern class. 

 \section{Preliminaries}
 \label{sec:prelim}
 Let $X$ be a compact K\"ahler manifold, and $\Omega\in \cK_X$ a K\"ahler class on $X$. We denote the Futaki invariant of $(X,\Omega)$ by 
 $$
 \begin{array}{cccc}
  
 \Fut_{(X,\Omega)} :  & \aut_0(X) &  \to & \C \\
                &    v      &  \mapsto & -\int_X \, f_{v,g}\, \mathrm{scal}_g\, d\mu_g,
 \end{array}
 $$
  where $\aut_0(X)$ is the Lie algebra of the reduced automorphism group of $X$, $g$ denotes a K\"ahler metric with K\"ahler form in $\Omega$ and volume form $d \mu_g$,  $f_{v,g}$ is the normalised holomorphy potential of $v$ with respect to $g$, and $\mathrm{scal}_g$ denotes the scalar curvature of $g$ (see e.g. \cite{CalabiII}, \cite[Section 3.1]{LeSim94} or \cite[Chapter 4]{gauduchon} for this formulation of the Futaki invariant, initially introduced in \cite{Futaki83}). 
  
  We will be interested in $K$-polystable Fano manifolds, or equivalently Fano manifolds admitting a K\"ahler--Einstein metric of positive curvature by the resolution of the Yau--Tian--Donaldson conjecture (\cite{YauOpenPb,Tian97,DonaldsonToric,CDS}). For such manifolds, by Matsushima's result (\cite{matsushima57}), and from Bochner's formula (see \cite[Section 3.6]{gauduchon}), we have $\aut_0(X)=\aut(X)$. We will therefore consider the Futaki invariant as a map 
 $$
 \Fut_X : \cK_X \to \aut(X)^*.
 $$
 By construction, $\Fut_X$ vanishes on any class that admits a cscK metric, and it is then straightforward that $\Fut_X\equiv 0$ whenever $X$ is a $K$-polystable Fano manifold with Picard rank $1$, or when the automorphism group of $X$ is finite.
 
\subsection{The case of smooth Del Pezzo surfaces}
\label{sec:fanosurfaces}
We refer here the reader to \cite[Section 2]{cps19} and \cite[Section 2]{fanothreefolds}.
  If $X$ is a smooth Del Pezzo surface with infinite automorphism group, then $K_X^2\in\lbrace 6, 7 , 8 , 9 \rbrace$. Moreover, it is $K$-polystable and of Picard rank $\rho(X)\geq 2$ if and only if $X=\P^1\times\P^1$ or $K_X^2=6$, i.e. when $X$ is a blow-up of $\P^2$ along three non-collinear points (\cite{TianYau87,Tian90}). In the first case, $X$ admits a product cscK metric in each class, and $\Fut_X\equiv 0$, while in the latter case, the vanishing locus of $\Fut_X$ is described in \cite[Section 5]{WangZhou} (see Section \ref{sec:family53} for the exact description). 

 \subsection{Further properties of the Futaki invariant}
The key property that we will use is the invariance of $\Fut_X$ under the $\Aut(X)$-action. This was already used in \cite[Chapter 3]{FutBook} to show the vanishing of $\Fut_X$ on specific examples. 

We will use the following proposition repeatedly.
 \begin{proposition}
 \label{prop:AdinvarianceFutCar}
  Let $(X,\Omega)$ be a polarised Fano manifold. Assume that there is $\tau\in \Aut(X)$ and $v\in \aut(X)$ such that 
  \begin{itemize}
   \item[(i)] $\tau^*\Omega=\Omega$,
   \item[(ii)] there is $c\in\C^*\setminus \lbrace 1\rbrace$ with  $\Ad_\tau(v)=c\cdot v$.
  \end{itemize}
Then  $\mathrm{Fut}_{(X,\Omega)}(v)=0$.
 \end{proposition}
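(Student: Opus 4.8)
The plan is to reduce the statement to a one-dimensional scaling argument built on the single structural property highlighted above, the equivariance of the Futaki invariant under automorphisms that fix the K\"ahler class. Once that equivariance is in place, hypothesis (ii) together with the $\C$-linearity of $\Fut_{(X,\Omega)}$ forces the value $\Fut_{(X,\Omega)}(v)$ to be a fixed point of multiplication by $c\neq 1$, hence zero.

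First I would prove the equivariance
$$\Fut_{(X,\Omega)}(v) = \Fut_{(X,\Omega)}(\Ad_\tau(v)).$$
The idea is to use that $\Fut_{(X,\Omega)}$ does not depend on the representative metric and to compute the left-hand side with the pulled-back metric $\tau^* g$, which by (i) satisfies $[\om_{\tau^* g}]=\tau^*\Omega=\Omega$ and is therefore admissible for the class $\Omega$. Writing $\Ad_\tau(v)=\tau_* v$, I would pull back the defining relation $\iota_{\Ad_\tau(v)}\om_g = i\bar\partial f_{\Ad_\tau(v),g}$ by the biholomorphism $\tau$; since $\tau$ is holomorphic and $\tau^*(\iota_{\tau_* v}\om_g)=\iota_v\om_{\tau^* g}$, this exhibits $\tau^* f_{\Ad_\tau(v),g}$ as a holomorphy potential for $v$ with respect to $\tau^* g$. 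As the normalisation, the scalar curvature $\scal_{\tau^* g}=\scal_g\circ\tau$, and the volume form $d\mu_{\tau^* g}=\tau^*(d\mu_g)$ are all transported by the same diffeomorphism $\tau$, the normalisation is preserved and hence $f_{v,\tau^* g}=\tau^* f_{\Ad_\tau(v),g}$. Substituting these transformation laws into the definition of $\Fut_{(X,\Omega)}(v)$ and using that $\tau$ is an orientation-preserving diffeomorphism, the change-of-variables formula collapses the integral to $-\int_X f_{\Ad_\tau(v),g}\,\scal_g\,d\mu_g=\Fut_{(X,\Omega)}(\Ad_\tau(v))$.

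It then remains only to conclude: by (ii) and linearity,
$$\Fut_{(X,\Omega)}(v)=\Fut_{(X,\Omega)}(\Ad_\tau(v))=\Fut_{(X,\Omega)}(c\cdot v)=c\,\Fut_{(X,\Omega)}(v),$$
so that $(1-c)\Fut_{(X,\Omega)}(v)=0$ and therefore $\Fut_{(X,\Omega)}(v)=0$ since $c\neq 1$. The main obstacle is entirely contained in the equivariance step: the only genuinely delicate points are verifying that the normalisation of the holomorphy potential is compatible with pullback by $\tau$, and keeping careful track of the (harmless, as biholomorphisms preserve orientation) behaviour of the integral under the change of variables. The scaling conclusion is then immediate.
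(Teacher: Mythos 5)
Your proof is correct and follows essentially the same route as the paper: both rest on the equivariance $\Fut_{(X,\Omega)}(\Ad_\tau(v))=\Fut_{(X,\Omega)}(v)$ (valid because $\tau^*\Omega=\Omega$), combined with linearity of the Futaki character and $c\neq 1$ to force the value to vanish. The only difference is that the paper cites this $\Ad$-invariance from the literature (Futaki's book, LeBrun--Simanca), whereas you verify it directly from the transformation laws of the normalised holomorphy potential, the scalar curvature and the volume form under pullback by $\tau$ -- a correct and self-contained check of the cited fact.
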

\begin{proof}
 This follows from the $\Ad$-invariance of the Futaki invariant, which implies that
 $$
 \Fut_{(X,(\tau^{-1})^*\Omega)}(\Ad_\tau(v))=\Fut_{(X,\Omega)}(v),
 $$
 see \cite[Chapter 3]{FutBook} or \cite[Section 3.1]{LeSim94}.
\end{proof}
\begin{remark}
 The anti-canonical class $c_1(X)$ is always $\Aut(X)$-invariant.
\end{remark}

As an application, we have the following useful corollary :
\begin{corollary}
 \label{cor:BlowupandAdinvarianceFutaCar}
 Let $\pi : X \to Y $ be the blow-up of a smooth Fano manifold $Y$ along smooth and disjoint subvarieties $ Z_i\subset Y$. Assume that there is a finite group $G\subset \Aut(Y)$ such that :
 \begin{itemize}
  \item[(i)] Each $Z_i$ is $G$-invariant;
  \item[(ii)] Each class $\Omega\in H^{1,1}(Y,\R)$ is $G$-invariant;
  \item[(iii)] For any $v\in\aut(Y)$ that lifts to $X$, there is $\tau\in G$ and $c\in \C^*\setminus \lbrace 1 \rbrace$ such that $\Ad_\tau(v)=c\cdot v$.
 \end{itemize}
 Then $\Fut_X\equiv 0$.
\end{corollary}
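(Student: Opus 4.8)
The plan is to produce, for every K\"ahler class $\Omega \in \cK_X$ and every $v \in \aut(X)$, a pair $(\tau, c)$ satisfying the hypotheses of Proposition \ref{prop:AdinvarianceFutCar}, so that $\Fut_{(X,\Omega)}(v) = 0$ and hence $\Fut_X \equiv 0$. First I would upgrade $G$ to a group of automorphisms of $X$: since each $Z_i$ is $G$-invariant by (i), the universal property of the blow-up lets each $\tau \in G$ lift to an automorphism $\tilde\tau$ of $X$ commuting with $\pi$, giving an embedding $G \hookrightarrow \Aut(X)$. These lifts will supply the elements required in the proposition.

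Next I would transfer the two structural inputs from $Y$ to $X$. On the one hand, pushforward by $\pi$ identifies $\aut(X)$ with the subalgebra of $\aut(Y)$ of holomorphic vector fields tangent to all the $Z_i$ --- precisely those that lift to $X$ --- and this identification is $G$-equivariant because the lifts $\tilde\tau$ commute with $\pi$. Thus hypothesis (iii), which is phrased for liftable $v \in \aut(Y)$, applies to every $v \in \aut(X)$ and yields $\tau \in G$ and $c \in \C^* \setminus \{1\}$ with $\Ad_{\tilde\tau}(v) = c \cdot v$ on $X$, giving condition (ii) of the proposition. On the other hand, the blow-up formula $H^{1,1}(X,\R) = \pi^* H^{1,1}(Y,\R) \oplus \bigoplus_i \R\, c_1(E_i)$, with $E_i = \pi^{-1}(Z_i)$ the exceptional divisor, shows that $G$ acts trivially on $H^{1,1}(X,\R)$: it fixes $\pi^* H^{1,1}(Y,\R)$ pointwise by (ii), and fixes each class $c_1(E_i)$ because $Z_i$, hence $E_i$, is $G$-invariant. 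In particular $\tilde\tau^* \Omega = \Omega$ for every $\Omega \in \cK_X$, which is condition (i).

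With both conditions in hand, applying Proposition \ref{prop:AdinvarianceFutCar} to the pair $(\tilde\tau, c)$ for each $\Omega$ and $v$ finishes the argument. The main obstacle I anticipate is the $G$-equivariant identification $\aut(X) \cong \{ v \in \aut(Y) : v \text{ tangent to every } Z_i \}$: one must check both that pushforward lands in the liftable fields and is surjective onto them, and that the $G$-actions are genuinely intertwined, since it is exactly this compatibility that lets hypothesis (iii) cross from $Y$ to $X$. A secondary point to handle with care is the cohomological decomposition when some $Z_i$ is disconnected, where one should verify that the K\"ahler classes of $X$ remain $G$-invariant rather than merely permuted among exceptional summands.
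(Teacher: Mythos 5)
Your proposal is correct and follows essentially the same route as the paper: lift $G$ to $X$ using (i), observe that $H^{1,1}(X,\R)$ is spanned by pullbacks and exceptional classes so that (i)--(ii) give $G$-invariance of all K\"ahler classes, transfer the relation $\Ad_\tau(v)=c\cdot v$ from $Y$ to $X$ (the paper does this by continuity off the exceptional locus, you by a $G$-equivariant identification of $\aut(X)$ with the liftable fields --- the same point), and conclude with Proposition \ref{prop:AdinvarianceFutCar}. Your closing caveat about disconnected $Z_i$ whose components are permuted is a fair one, but in the paper's applications of the corollary each blown-up centre is irreducible, and the swapped-component situations are handled by applying Proposition \ref{prop:AdinvarianceFutCar} directly to particular invariant classes.
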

\begin{proof}
From hypothesis (i), the $G$-action on $Y$ lifts to a $G$-action on $X$.  The vector space $H^{1,1}(X,\R)$ is spanned by the pullback of the classes in $H^{1,1}(Y,\R)$ and the exceptional divisors of $\pi$.  By hypothesis (i) and (ii), any class in $H^{1,1}(X,\R)$ is then $G$-invariant. The Lie algebra $\aut(X)$ is spanned by lifts of elements in $\aut(Y)$ that preserve the $Z_i$'s. For any such element, the identity $\Ad_\tau(v)=c\cdot v$ holds on $X\setminus \bigcup_i \pi^{-1}(Z_i)$, hence on $X$, by continuity. The result follows from Proposition \ref{prop:AdinvarianceFutCar}.
\end{proof}
\begin{remark}
In practice, we will mainly use Corollary \ref{cor:BlowupandAdinvarianceFutaCar} with 
$$G\simeq \Z/2\Z,\: \aut(X)\simeq \C,\:  c=-1.$$
To prove item $(i)$ of Proposition \ref{prop:AdinvarianceFutCar} or item $(ii)$ of Corollary \ref{cor:BlowupandAdinvarianceFutaCar}, we will use the fact that in homogeneous coordinates, the Fubini--Study metric 
$$
\om_{FS}=\frac{i}{2}\del\delb \log(\vert z \vert^2),
$$
and hence its class $[\om_{FS}]\in H^{1,1}(\P^n,\R)$, is invariant under the $\mathfrak{S}_{n+1}$-action on $\P^n$ by permutation of the homogeneous coordinates.
\end{remark}
 
\subsection{The list to check}
From the discussion in the beginning of this section, to prove Theorem \ref{theo:intro}, it is enough to consider $K$-polystable Fano threefolds with infinite automorphism group and Picard rank $\rho(X)\geq 2$. From \cite[Section 6]{fanothreefolds}, this reduces to Fano threefolds in family N\textdegree $\cN$, for 
$$
\cN\in\left\{ \begin{array}{lll}
             2.20, 2.21, 2.22,2.24, 2.27,  2.29, 2.32 ,2.34 ,3.5 , 3.8, 3.9, \\   
              3.10, 3.12 ,3.13, 3.15, 3.17 , 3.19, 3.20, 3.25 , 3.27, 4.2, 4.3,\\
              4.4,
 4.6 ,4.7, 4.13 , 5.1 ,5.3, 6.1, 7.1, 8.1, 9.1, 10.1  
              \end{array}
 \right\}.
$$
The strategy of the proof is then direct -- we will use the invariance of $\Fut_X$ to show its vanishing on $\cK_X$ using a case by case study. For $X$ belonging to family N\textdegree$\cN$ with 
$$
\cN\in \left\{ \begin{array}{lll}
             2.20, 2.21, 2.22,2.24, 2.27,  2.29, 2.32 ,2.34 ,  \\   
              3.5 , 3.8,3.10, 3.12 , 3.15, 3.17 ,  3.25 , 3.27,  \\
 4.3,4.6 , 4.13 , 5.1 , 6.1, 7.1, 8.1, 9.1, 10.1  
              \end{array}\right\},
$$
we will see that $\Fut_X\equiv 0$.  

For the remaining $8$ families, we will obtain explicit K\"ahler classes of the form 
$$
c_1(X)+\ep c_1(D)\in\cK_X
$$
with $D\subset X$ a divisor and $\ep\in\R$ a parameter such that $\Fut_{(X,c_1(X)+\ep c_1(D))}=0$. As the vanishing of the Futaki invariant is preserved under scaling of the K\"ahler metric, this provides the $2$-dimensional families of K\"ahler classes with vanishing Futaki invariant alluded to in the introduction. Corollary \ref{cor:intro} then follows from LeBrun--Simanca's openness theorem \cite{LeSim93}, which asserts that the locus in the K\"ahler cone of K\"ahler classes that admit an extremal metric in the sense of Calabi (\cite{Calabi}) is open, together with the characterisation of cscK metrics amongst extremal metrics as the ones with zero Futaki invariant (\cite{CalabiII}).

\section{Families with $\aut(X)\simeq \mathfrak{sl}_n(\C)$}
\label{sec:sln}
Here we will consider families N\textdegree $\cN$, with 
$$\cN\in\lbrace  2.27, 2.32, 3.17,  4.6, 6.1, 7.1, 8.1, 9.1, 10.1 \rbrace.$$
The Lie algebra $\mathfrak{sl}_n(\C)$ is simple, hence equal to its derived ideal $[\mathfrak{sl}_n(\C),\mathfrak{sl}_n(\C)]$. As the Futaki invariant is a character from $\aut(X)$ to $\C$ (see e.g. \cite[Chapter 3]{FutBook} or \cite[Section 3.1]{LeSim94}), it vanishes on the derived ideal $[\aut(X),\aut(X)]$. Hence, if $\aut(X)\simeq\mathfrak{sl}_n(\C)$, $[\aut(X),\aut(X)]=\aut(X)$ and the Futaki invariant vanishes identically on the whole K\"ahler cone of $X$. From 
$$
\mathrm{PGL}_n(\C)\simeq \mathrm{SL}_n(\C)/\mu_n,
$$
the Lie algebra of $\mathrm{PGL}_n(\C)$ is $\mathfrak{sl}_n(\C)$. From \cite[Section 6, Big Table]{fanothreefolds}, this settles the case of all the $K$-polystable Fano threefolds in families N\textdegree $\cN$, with 
$$
\cN\in\lbrace 2.27, 2.32, 3.17, 4.6, 6.1, 7.1, 8.1, 9.1, 10.1\rbrace,
$$
and also some cases in families $\lbrace 2.21, 3.13 \rbrace$.

\section{Products}
\label{sec:products}
Next, we consider families N\textdegree  2.34, N\textdegree 3.27  and N\textdegree5.3, which are products of lower dimensional Fano manifolds.

\subsection{Families 2.34 and 3.27}
The unique members in these two families are $\P^1\times\P^2$ and $\P^1 \times \P^1 \times \P^1$, which both carry a product of cscK metrics in any class, and thus has vanishing Futaki character for any K\"ahler class.

\subsection{Family 5.3}
\label{sec:family53}
The unique Fano threefold in family 5.3 is $\P^1\times S_6$, where $S_6$ is the Del Pezzo surface with $K_{S_6}^2=6$. It is $K$-polystable as a product of K\"ahler--Einstein manifolds from \cite{TianYau87,Tian90}. The surface $S_6$ is the unique (up to isomorphism) toric surface obtained by blowing-up $\P^2$ in the three fixed points under the torus action. We denote by $H$ (the strict transform of) a generic hyperplane and $D_1, D_2$ and $D_3$ the three exceptional divisors in $S_6$. From \cite[Section 5, Proposition 5.2 and Remark 5.1.(iii)]{WangZhou}, the Futaki invariant of $S_6$ vanishes exactly in the following families of K\"ahler classes
$$
3c_1(H)-ac_1(D_1)-bc_1(D_2)-(3-a-b)c_1(D_3)
$$
and
$$
3c_1(H)-c(c_1(D_1)+c_1(D_2)+c_1(D_3)),
$$
where $a,b,c$ are positive constants satisfying $a+b<3$ and $c<\frac{3}{2}$. As the Futaki invariant vanishes on $\P^1$, we easily deduce the vanishing locus of the Futaki invariant on $X=\P^1\times S_6$. In particular, as $c_1(X)=c_1(\P^1)+c_1(S_6)$, and as $c_1(S_6)=3c_1(H)-(c_1(D_1)+c_1(D_2)+c_1(D_3))$, we deduce the existence of differentiable families of K\"ahler classes on $X$ containing $c_1(X)$ for which the Futaki invariant vanishes identically.

\section{Blow-ups of projective space}
In this section we address families N\textdegree $\cN$, with $$\cN\in\lbrace 2.22, 3.12, 3.25\rbrace.$$ All the members of these families are obtained by blowing up certain curves in projective space $\P^3$.

\subsection{Family 2.22}
Members of the family 2.22 of Fano threefolds are obtained as blowups of certain curves in $\P^3$. More precisely, let $\Phi : \P^1 \times \P^1 \to \P^3$ be the Segre embedding
$$
([x:y],[u:v]) \mapsto [xu : xv : yu : yv].
$$
The image of $\Phi$ is the surface 
$$
S=\lbrace z_0 z_3 - z_1 z_2=0 \rbrace.
$$
A Fano threefold $X$ is in the family 2.22 if it is the blowup of the image via $\Phi$ of a curve $\check C$ with $\mathcal{O}(\check C) = \mathcal{O}(3,1).$ Such $X$ have Picard rank $2$, generated by the line bundle associated to the proper transform of a hyperplane and of that generated by the exceptional divisor $E$ of the blowup. The $K$-polystability of members of this family (with respect to the anticanonical polarisation) is discussed in detail in \cite{cheltsovpark22}. 

Up to biholomorphism, there is a unique member $X_0$ of this family with infinite automorphism group.
It is $K$-polystable, and can be obtained by picking the curve $\check C$ to be $\check C_0 = \lbrace ux^3-vy^3=0\rbrace$, so that 
$$
X_0 = \Bl_{C_0} \P^3,
$$
where $C_0 = \Phi(\check C_0).$ The $\C^*$-action 
$$\lambda\cdot([z_0 : z_1 : z_2 : z_3]) = [\lambda z_0 : \lambda^4 z_1 : z_2 : \lambda^3 z_3]$$
preserves $C_0$ and so lifts to $X_0$. This generates $\Aut_0 (X_0)$ (see \cite[Lemma 6.13]{cps19}). 

The curve $C_0$ is a rational curve, which can e.g. be seen by applying the Riemann--Hurwitz formula to the restriction to $\check C_0 \subset \P^1 \times \P^1$ of the projection to the second factor. An explicit parametrisation is given by
$$
[\tau_0 : \tau_1] \mapsto [\tau_0\tau_1^3 : \tau_0^4 : \tau_1^4 : \tau_1 \tau_0^3].
$$
Note that the action of the involution $\tau$ given by
$$
\tau\cdot([z_0 : z_1 : z_2 : z_3]) = [z_3 : z_2 : z_1 : z_0]
$$
on $\P^3$ preserves $C_0$ and so lifts to $X_0$. We can then apply Corollary \ref{cor:BlowupandAdinvarianceFutaCar} to the blow-up $X_0\to \P^3$ with the group $G=\langle \tau \rangle \simeq \Z/2\Z$, which implies the vanishing of the Futaki invariant on the K\"ahler cone of $X_0$.

\subsection{Family 3.12}
From \cite[Section 5.18]{fanothreefolds}, the only element in Family 3.12 with infinite automorphism group is given, up to isomorphism, by $X=\mathrm{Bl}_{L\cup C}(\P^3)$ the blow up of $\P^3$ along the disjoint curves 
$$
L=\lbrace x_0=x_3=0 \rbrace \subset \P^3
$$
and
$$
C=\lbrace [s^3 : s^2 t : st^2 : t^3 ], [s : t]\in \P^1 \rbrace\subset \P^3.
$$
The reduced automorphism group of $X$ is isomorphic to $\C^*$, and its action is given by the lift of the $\C^*$-action on $\P^3$ described by
$$
\lambda\cdot ([x_0:x_1:x_2:x_3] ) =[x_0 : \lambda x_1 : \lambda^2 x_2 : \lambda^3 x_3] .
$$
Then, we can consider the $\Z/2\Z$-action given by 
$$
\tau([x_0:x_1 : x_2 : x_3]) = [x_3:x_2:x_1:x_0].
$$
The group generated by $\tau$ in $\Aut(\P^3)$ satisfies hypothesis $(i)-(iii)$ from Corollary \ref{cor:BlowupandAdinvarianceFutaCar}, and we deduce that the Futaki invariant of $X$ vanishes on the whole K\"ahler cone.

\subsection{Family 3.25}
The Fano threefold $X$ in family 3.25 is the blow-up of $\P^3$ in two disjoint lines. It is $K$-polystable from \cite{BatSel,WangZhu}. We can assume the two blown-up lines are $\lbrace x_1=x_2=0\rbrace\subset \P^3$ and $\lbrace x_3=x_4=0 \rbrace\subset \P^3$. One has 
$$
\Aut_0(X)\simeq \PGL_{(2,2)}(\C)\simeq \GL_2(\C)\times\GL_2(\C)/\C^*,
$$
where the first (resp. second) $\GL_2(\C)$ factor acts linearly on the coordinates $(x_1,x_2)$ (resp. on $(x_3,x_4)$) while the $\C^*$-action corresponds to homotheties on $\C^4$ (see \cite[Section 4]{cps19}). The Lie algebra $\aut(X)$ of $\Aut_0(X)$ fits in an exact sequence
$$
0 \to \C \to  \gl_2(\C)\oplus\gl_2(\C) \to \aut(X) \to 0.
$$
We also have the sequence induced by the trace map $\gl_2(\C) \to \C$ :
$$
0\to \sl_2(\C) \to \gl_2(\C) \to \C \to 0,
$$
from which we deduce the sequence of vector spaces 
$$
0 \to \C \to  (\sl_2(\C)\oplus\C)\oplus(\sl_2(\C)\oplus\C) \to \aut(X) \to 0.
$$
From the discussion in Section \ref{sec:sln}, the Futaki invariant of $X$ will vanish on the $\sl_2(\C)$-factors that project to $\aut(X)$. Hence, it is enough to test the vanishing of the Futaki invariant on the generators of the remaining two $\C^*$-actions modulo homotheties, which are induced by:
$$
(\lambda,\mu)\cdot ([x_0:x_1:x_2:x_3])=[\lambda x_0: x_1 : \mu x_2 : x_3],
$$
where  $(\lambda,\mu)\in (\C^*)^2 $. We can consider the finite group $G$ generated by the reflections 
$$
\tau( [x_0:x_1:x_2:x_3])= [x_1:x_0:x_2:x_3]
$$
and 
$$
\sigma( [x_0:x_1:x_2:x_3])= [x_0:x_1:x_3:x_2].
$$
This group preserves the two blown-up lines, while the adjoint action of $\tau$ (resp. $\sigma$) sends the generator of the $\lambda$-action (resp. the $\mu$-action) to its inverse. Hence, from Corollary \ref{cor:BlowupandAdinvarianceFutaCar}, we see that the Futaki invariant of $X$ vanishes on its whole K\"ahler cone.

\section{Blow-ups of products of projective spaces}
In this section, we will consider families N\textdegree $\cN$, with $$\cN\in\lbrace 3.5, 4.3, 4.13 \rbrace.$$ These are obtained as blowups of products of projective spaces.

\subsection{Family 3.5}
From \cite[Section 5.14]{fanothreefolds}, the only element in Family 3.5 with infinite automorphism group is given, up to isomorphism, by $X=\mathrm{Bl}_{C}(\P^1\times \P^2)$ the blow up of $\P^1\times\P^2$ along the curve $C=\psi(\check C)$ given by the image of
$$
\check C=\lbrace ux^5+vy^5=0 \rbrace \subset \P^1\times\P^1
$$
via the map 
$$
\begin{array}{cccc}
 \psi : & \P^1\times\P^1 & \to & \P^1\times \P^2\\
        &    ([u:v],[x:y])   & \mapsto & ([u:v],[x^2:xy:y^2]).  
\end{array}
$$
Then, $\Aut_0(X)\simeq\C^*$, where the $\C^*$-action is generated by the lift to $X$ of the action
$$
\lambda\cdot([u:v],[x_0:x_1:x_2])=([\lambda^5 u : v],[x_0:\lambda x_1:\lambda^2 x_2]).
$$
We also have a $\Z/2\Z$-action induced by
$$
\tau([u:v],[x_0:x_1:x_2])=([v:u],[x_2:x_1:x_0]).
$$
Those actions come respectively from the actions
$$
\lambda\cdot([u:v],[x:y])=([\lambda^5 u : v],[x:\lambda y])
$$
and 
$$
\tau([u:v],[x:y])=([v:u],[y:x])
$$
on $\P^1\times\P^1$, with respect to which $\psi$ is equivariant. Then, we see that $C$ is $\tau$-invariant, as well as the classes $\pi_i^*[\om_{FS}^i]$, where $\pi_1 : \P^1\times\P^2 \to \P^1$ and $\pi_2 : \P^1\times \P^2 \to \P^2$ denote the projections and $\om_{FS}^i$ stands for the Fubini--Study metric on $\P^i$. Finally, identifying $\lambda\in\C^*$ with its action, we have $\tau\circ\lambda\circ \tau^{-1}=\lambda^{-1}$. Hence, hypothesis $(i)-(iii)$ from Corollary \ref{cor:BlowupandAdinvarianceFutaCar} are satisfied, and the Futaki invariant of $X$ vanishes for any K\"ahler class.

\subsection{Family 4.3}
Following \cite[Section 5.21]{fanothreefolds}, up to isomorphism, the unique Fano threefold in Family 4.3 is the blow-up of $\P^1\times \P^1\times \P^1$ along
$$
C=\lbrace x_0y_1 - x_1 y_0= x_0z_1^2+x_1z_0^2=0 \rbrace 
$$
where $[x_0:x_1]$, $[y_0:y_1]$ and $[z_0:z_1]$ denote the homogeneous coordinates on the first, second and last factor respectively. We have $\Aut_0(X)\simeq \C^*$ where the action is given by the lift of the $\C^*$-action on $\P^1\times \P^1\times \P^1$ given by
$$
\lambda\cdot([x_0:x_1],[y_0:y_1],[z_0: z_1])=([x_0:\lambda^2x_1],[y_0:\lambda^2y_1],[z_0:\lambda z_1]).
$$
The involution
$$
\tau ([x_0:x_1],[y_0:y_1],[z_0: z_1])=([x_1:x_0],[y_1:y_0],[z_1: z_0])
$$
preserves $C$ and the $(1,1)$-classes on $C$ given by $\iota^*_j[\om_{FS}]$, for $\iota_j$ the composition of the inclusion $C\subset \P^1\times\P^1\times\P^1$ and the projection on the $j$-th factor. The adjoint action of $\tau$ maps the generator of the $\C^*$-action to its inverse, so Proposition \ref{prop:AdinvarianceFutCar} applies and the Futaki invariant of $X$ vanishes identically.

\subsection{Family 4.13}
From \cite[Section 5.22]{fanothreefolds}, the only element in Family 4.13 with infinite automorphism group is given, up to isomorphism, by $X=\mathrm{Bl}_C(\P^1\times \P^1\times \P^1)$ the blow up of $\P^1\times \P^1\times \P^1$ along the curve 
$$
C=\lbrace x_0y_1-x_1y_0=x_0^3z_0+x_1^3z_1=0\rbrace.
$$
The reduced automorphism group of $X$ is isomorphic to $\C^*$, and its action is given by the lift of the $\C^*$-action on $\P^1\times \P^1\times \P^1$ described by
$$
\lambda\cdot ([x_0:x_1],[y_0:y_1],[z_0:z_1] ) =([\lambda x_0:x_1],[\lambda y_0:y_1],[\lambda^{-3} z_0:z_1] ).
$$
Then, we can consider the $\Z/2\Z$-action given by 
$$
\tau([x_0:x_1],[y_0:y_1],[z_0:z_1]) = ([x_1:x_0],[y_1:y_0],[z_1:z_0]).
$$
Clearly, this action satisfies hypothesis $(i)-(iii)$ from Corollary \ref{cor:BlowupandAdinvarianceFutaCar} (notice that $\tau\circ \lambda \circ \tau^{-1}=\lambda^{-1}$, identifying $\lambda$ with the induced action), from which we deduce the vanishing of the Futaki invariant of $X$ for any K\"ahler class.

\section{Blow-ups of a smooth quadric}
In this section, we consider families N\textdegree $\cN$, with $$\cN\in\lbrace 2.21, 2.29, 3.10, 3.15, 3.19, 3.20, 4.4, 5.1  \rbrace.$$

\subsection{Family 2.21}
This family is somewhat similar to the Mukai--Umemura family 1.10. In addition to members of the family with discrete automorphism group, there is a one-dimensional family with automorphism group containing a semi-direct product of $\C^*$ and $\Z/2\Z$, one member which admits an effective $\textnormal{PGL}_2$-action and one member which has a reduced automorphism group $\mathbb{G}_a$. The first two of these are $K$-polystable for the anti-canonical polarisation, whereas the last does not have a reductive automorphism group and is therefore not $K$-polystable.

The members that admit an effective $\mathbb{G}_m$-action can be described as follows (see \cite[Section 5.9]{fanothreefolds}). Let $C$ be the quartic rational curve in $\P^4$ given as the image of the map $\P^1 \to \P^4$ given by
$$
[p:q] \mapsto [p^4: p^3 q : p^2q^2 : pq^3 : q^4].
$$
For $t \notin \{0, \pm 1\}$, let $Q_t$ be the smooth hypersurface 
$$
Q_t = V(z_1z_3 -t^2z_0z_4 + (t^2-1)z_2^2).
$$
Note that $C \subset Q_t$ for any $t$. Let $X_t = \Bl_C( Q_t)$. Then $X_t$ is one of the members that admit an effective $\C^*$-action (including the member with an effective $\textnormal{PGL}_2$-action, which corresponds to $t=\pm \frac{1}{2}$). Note that $X_t$ has Picard rank $2$, generated by a hyperplane $H$ and the exceptional divisor $E$ of the blowup. 

The $\C^*$-action given by 
$$
\lambda\cdot([z_0:z_1:z_2:z_3:z_4]) = [z_0: \lambda z_1: \lambda^2 z_2: \lambda^3 z_3: \lambda^4 z_4]
$$
preserves $C$ and $Q_t$, as does the involution
$$
\tau([z_0:z_1 : z_2 : z_3 : z_4] )= [z_4 : z_3 : z_2 : z_1 : z_0].
$$ 
The lifts of these generate the effective actions of $\C^*\rtimes\Z/2\Z$ on $X_t$. As $\tau$ preserves $C$, the class $[\om_{FS}]_{\vert X_t}$, and sends a generator of the $\C^*$-action to its inverse by conjugation, Proposition \ref{prop:AdinvarianceFutCar} shows that the Futaki invariant of $X_t$ vanishes on its whole K\"ahler cone (note that the case $t=\pm \frac{1}{2}$, with $\aut(X_t)\simeq \sl_2(\C)$, was dealt with in Section \ref{sec:sln}).

\subsection{Family 2.29}
There is a unique smooth  Fano threefold $X$ in family 2.29. It is isomorphic to the blow-up of
$$
Q=\lbrace x_0^2+x_1x_2 +x_3x_4=0 \rbrace \subset \P^4.
$$
along the smooth conic
$$
C=\lbrace x_0^2+x_1x_2=x_3=x_4=0 \rbrace \subset Q.
$$
It is $K$-polystable (see \cite{SussKE,SussPictureBook,IltenSuss}) and the group $\Aut_0(X)$ is isomorphic to $\C^*\times \PGL_2(\C)$ (see \cite[Lemma 5.8]{cps19}). We then have $\aut(X)\simeq \C\oplus \sl_2(\C)$. From the discussion in Section \ref{sec:sln}, the Futaki invariant of $(X, [\om])$ vanishes on the $\sl_2(\C)$-component of $\aut(X)$
for any K\"ahler class $[\om]$. Thus, to check the vanishing of the Futaki invariant, it remains to check the vanishing on the $\C$-component of $\aut(X)$. From \cite[Lemma 5.7]{cps19}, the $\C^*$-component of $\Aut_0(X)$ can be identified with the pointwise stabiliser of $C$ in $\Aut_0(Q)$. This is then the $\C^*$-action induced by
$$
\lambda\cdot ([x_0:x_1:x_2:x_3:x_4])=[x_0:x_1:x_2:\lambda x_3:\lambda^{-1}x_4].
$$
We then introduce the involution 
$$
\tau([x_0:x_1:x_2:x_3:x_4])=[x_0:x_2:x_1:x_4:x_3].
$$
This automorphism of $\P^4$ preserves $Q$ and $C$ and lifts to an automorphism of $X$.  
Its adjoint action maps a generator of the $\C^*$-action of interest to its inverse, and by Corollary \ref{cor:BlowupandAdinvarianceFutaCar}, we deduce the vanishing of the Futaki invariant of $X$ for any K\"ahler class.

\subsection{Family 3.10}
Let $X$ be a $K$-polystable element in the family 3.10 such that $\Aut(X)$ is infinite. Then, from \cite[Section 5.17]{fanothreefolds}, up to isomorphism, we may assume that $X=\Bl_{C_1\cup C_2}(Q_a)$ is the blow-up of the quadric 
$$
Q_a=\lbrace w^2+xy+zt+a(xt+yz)=0 \rbrace \subset \P^4
$$
along the two disjoint smooth irreducible conics $C_1\subset Q_a$ and $C_2\subset Q_a$ given by 
$$
C_1=\lbrace w^2+zt=x=y=0 \rbrace
$$
and
$$
C_2=\lbrace w^2+xy=z=t=0 \rbrace
$$
where $[x,y,z,t,w]$ stand for the homogeneous coordinates on $\P^4$ and where $a\in\C\setminus \lbrace -1, +1 \rbrace$ is a complex parameter. Moreover, for $a=0$, $\Aut_0(X)\simeq (\C^*)^2$ and for $a\neq 0$, $\Aut_0(X)\simeq \C^*$.
\subsubsection{Case $a=0$}
In this situation, the $(\C^*)^2$-action on $X$ is the lift of the action on $Q_0$ induced by the following formula, for $(\alpha,\beta)\in(\C^*)^2$ :
$$
(\alpha,\beta)\cdot ([x:y:z:t:w])=[\alpha x : \alpha^{-1}y : \beta z : \beta^{-1} t : w].
$$
Consider the group $G=\Z/2\Z \times \Z/2\Z$ generated by $(\sigma,\tau)$ defined by
$$
\sigma([x:y:z:t:w])=[y:x:z:t:w]
$$
and 
$$
\tau([x:y:z:t:w])=[x:y:t:z:w].
$$
Then $G \subset \Aut(Q_0)$, and $G$ preserves $C_1$ and $C_2$. It also leaves invariant the class $\iota^*[\om_{FS}]$ on $Q_0$, where $\iota : Q_0 \to \P^4$ denotes the inclusion and $\om_{FS}$ the Fubini--Study metric. Hence, hypothesis $(i)$ and $(ii)$ of Corollary \ref{cor:BlowupandAdinvarianceFutaCar} are satisfied. Finally, $\Ad_\sigma(v_1)=-v_1$ and $\Ad_\tau(v_2)=-v_2$, where $v_1$ generates the $\C^*$-action $\alpha \mapsto [\alpha x : \alpha^{-1}y :  z :  t : w]$ while $v_2$ generates the $\C^*$-action $\beta \mapsto [x : y : \beta z : \beta^{-1} t : w]$. Then, Corollary \ref{cor:BlowupandAdinvarianceFutaCar} implies the vanishing of the Futaki invariant on $X$ for any class.

\subsubsection{Case $a\neq 0$}
The same argument as in the previous case applies, where this time the $\C^*$-action of $\Aut_0(X)$ is induced by the diagonal of the above, given by
$$
\alpha\cdot ([x:y:z:t:w])=([\alpha x : \alpha^{-1}y : \alpha z : \alpha^{-1} t : w]).
$$
and the group $G\simeq\Z/2\Z$ is generated by 
$$
\varsigma([x:y:z:t:w])=([y:x:t:z:w]).
$$

\subsection{Family 3.15}
From \cite[Section 5.20]{fanothreefolds}, the only smooth $K$-polystable Fano threefold in family 3.15 is given by the blow-up $X=\Bl_{L\cup C}(Q)\to Q$ of the quadric 
$$
Q=\lbrace x_0^2+2x_1x_2+2x_1x_4+2x_2x_3 =0\rbrace \subset \P^4
$$
along the line 
$$
L=\lbrace x_0=x_1=x_2=0 \rbrace
$$
and the smooth conic (disjoint from $L$)
$$
C=\lbrace x_0^2+2x_1x_2=x_3=x_4=0 \rbrace.
$$
The automorphism group of $X$ satisfies $\Aut_0(X)\simeq \C^*$ with $\C^*$-action given, for $\lambda\in\C^*$, by (the lift of)
$$
\lambda\cdot([x_0:x_1:x_2:x_3:x_4])=[\lambda x_0 : \lambda^2 x_1 : x_2 : \lambda^2 x_3 : x_4 ].
$$
The involution 
$$
\tau([x_0:x_1:x_2:x_3:x_4])=[x_0:x_2:x_1:x_4:x_3]
$$
preserves $Q$, $L$ and $C$. It also leaves the class $\iota^*[\om_{FS}]$ invariant, where $\iota : Q \to \P^4$ is the inclusion. Then, Corollary \ref{cor:BlowupandAdinvarianceFutaCar} applies to $X\to Q$ and $G=\langle \tau \rangle\simeq \Z/2\Z$, so that the Futaki invariant of $X$ identically vanishes on $\cK_X$.

\subsection{Families 3.19 and 3.20}

Consider the smooth quadric Fano threefold 
$$
Q=\lbrace x_0^2+x_1x_2 +x_3x_4=0 \rbrace \subset \P^4.
$$
The family 3.19 (resp. 3.20) is obtained by blowing-up $Q$ in two points (respectively two disjoint lines). More precisely, we can obtain the unique Fano threefold in family 3.19 by considering $X_1$ to be the blow-up of $Q$ along the points
$$
P_1=[0:0:0:1:0]
$$
and 
$$
P_2=[0:0:0:0:1].
$$
The unique  Fano threefold $X_2$ in family 3.20 is the blow-up of $Q$ along the two disjoint lines
$$
L_1=\lbrace x_0=x_1=x_3=0 \rbrace
$$
and 
$$
L_2=\lbrace x_0=x_2=x_4=0 \rbrace.
$$
In both cases, the Fano threefold $X_i$ is $K$-polystable (see \cite{SussKE,SussPictureBook,IltenSuss}) and the group $\Aut_0(X_i)$ is isomorphic to $\C^*\times \PGL_2(\C)$ (see \cite[Section 5]{cps19}). We then have $\aut(X_i) = \C\oplus \sl_2(\C)$. From the discussion in Section \ref{sec:sln}, the Futaki invariant of $(X_i, [\om_i])$ vanishes on the $\sl_2(\C)$-component of $\aut(X_i)$
for any K\"ahler class $[\om_i]$. Therefore, to check the vanishing of the Futaki invariant on $(X_i,[\om_i])$, it remains to check the vanishing on the $\C$-component of $\aut(X_i)$. 

To this aim we introduce the involution 
$$
\tau([x_0:x_1:x_2:x_3:x_4])=[x_0:x_2:x_1:x_4:x_3].
$$
This automorphism of $\P^4$ preserves $Q$, and swaps the two connected components of the blown-up locus in both cases. Therefore, $\tau$ lifts to an automorphism of $X_i$, still denoted $\tau$, for $i\in\lbrace 1,2\rbrace$. Note that on $X_i$, any K\"ahler class of the form 
$$[\om_\ep]:=c_1(X_i)+\ep (c_1(\cO(E_1^i))+c_1(\cO(E_2^i)))$$
is $\tau$-invariant, where $\ep\in \R$ is chosen so that
$$
c_1(X_i)+\ep (c_1(\cO(E_1^i))+c_1(\cO(E_2^i)))>0
$$
and the $E_j^i$'s denote the exceptional divisors of the blow-up $X_i \to Q$.

Next, we investigate how this action interacts with the generator of the $\C^*$-component in $\Aut_0 (X_i)$, to verify that we can apply Proposition \ref{prop:AdinvarianceFutCar} to deduce the vanishing of the Futaki invariant. We do this for the two families separately. 

\subsubsection{Family 3.19}
\label{sec:family319}
We follow the discussion in \cite[Lemma 5.13]{cps19}. An automorphism of $X_1$ comes from an automorphism of $\P^4$ that leaves $Q$ and $\lbrace P_1\rbrace \cup \lbrace P_2 \rbrace$ invariant. By linearity, such an automorphism preserves the line spanned by the two points, and thus its orthogonal complement $\Pi=\lbrace x_3=x_4=0 \rbrace$. It then leaves the conic $C=Q\cap \Pi$ invariant. From \cite[Lemma 5.7]{cps19}, the $\C^*$-component of $\Aut_0(X_1)$ can be identified with the pointwise stabiliser of $C$ in $\Aut_0(Q)$. This is then the $\C^*$-action given by
$$
\lambda\cdot ([x_0:x_1:x_2:x_3:x_4])=[x_0:x_1:x_2:\lambda x_3:\lambda^{-1}x_4].
$$
The adjoint action of $\tau$ maps a generator of this action to its inverse, and by Proposition \ref{prop:AdinvarianceFutCar}, we deduce the vanishing of the Futaki invariant of $(X_1,[\om_\ep])$.

\subsubsection{Family 3.20}
Following the discussion in \cite[Lemma 5.14]{cps19}, the $\C^*$-component of $\Aut_0(X_2)$ is obtained as follows. An element in $\Aut_0(Q,L_1\cup L_2)$ must preserve the linear span of $L_1$ and $L_2$, that is $Q\cap \lbrace x_0=0 \rbrace$. It then leaves invariant 
$$
Q'=\lbrace x_0 = x_1 x_2 + x_3 x_4 =0 \rbrace.
$$
The group $\Aut_0(Q, L_1\cup L_2)$ acts on the family of lines $(\ell_t)_{t\in\P^1}$ in $Q'$ given by 
$$[x_1:x_3] \mapsto [0 : x_1 : t x_3 : x_3 : -tx_1] \subset Q'.$$
The $\C^*$-component of $\Aut_0(X)$ then corresponds to the stabiliser of the lines $L_1=\ell_\infty$ and $L_2=\ell_0$ under this action. In coordinates, the action is given by
$$
\lambda\cdot ([x_0:x_1:x_2:x_3:x_4 ])=[\lambda x_0:x_1:\lambda^2 x_2: x_3:\lambda^2 x_4 ].
$$
As with family 3.19, using the $\tau$-action and the $\mathrm{Ad}$-invariance of the Futaki invariant, we can conclude that the Futaki invariant of $(X_2,[\om_\ep])$ vanishes.

\subsection{Family 4.4}
Up to isomorphism, there is a unique smooth Fano threefold $X$ in family 4.4. Its automorphism group satisfies $\Aut_0(X)\simeq (\C^*)^2$, and it is $K$-polystable from \cite{SussKE,SussPictureBook,IltenSuss}. Recall that the smooth Fano threefold $X_1$ in family 3.19 can be obtained as a blow-up along two points of a smooth quadric $Q\subset \P^4$. We can then realise the manifold $X$ as the blow-up of $X_1$ along the proper transform of the conic that passes through the blown-up points in $Q$. Coming back to our parametrisation in Section \ref{sec:family319}, we can take $Q\subset \P^4$ to be 
$$
Q=\lbrace x_0^2+x_1x_2 +x_3x_4=0 \rbrace \subset \P^4
$$
and the blown-up points to be
$$
P_1=[0:0:0:1:0]
$$
and 
$$
P_2=[0:0:0:0:1].
$$
Then, the conic in $Q$ joining $P_1$ and $P_2$ is 
$$
\cC_1=\lbrace x_1=x_2=x_0^2+x_3x_4 =0 \rbrace \subset Q.
$$
The $(\C^*)^2$-action on $Q$ that lifts to $X$ through the two blow-up maps $X\to X_1 \to Q$ is given in coordinates by 
$$
(\lambda,\mu)\cdot([x_0:x_1:x_2:x_3:x_4])=[x_0:\lambda x_1:\lambda^{-1}x_2:\mu x_3:\mu^{-1}x_4].
$$
Again, the involution
$$
\tau([x_0:x_1:x_2:x_3:x_4])=[x_0:x_2:x_1:x_4:x_3]
$$
preserves $C_1$ and swaps the blown-up points. Arguing as before, we see that the Futaki invariant of $X$ will vanish in classes of the form
$$
c_1(X)+\ep\, c_1(\cE) + \delta\,(c_1(E_1)+c_1(E_2))
$$
for $(\ep,\delta)\in\R^2$ small enough and where $\cE$ is the exceptional divisor of $X\to X_1$, while $E_1$ and $E_2$ are the strict transforms of the exceptional divisors of $X_1\to Q$. Note that after scaling, this gives a $3$-dimensional family in the K\"ahler cone of $X$. 

\subsection{Family 5.1}
From \cite[Section 5.23]{fanothreefolds}, the unique smooth Fano threefold $X$ in family 5.1 is $K$-polystable. It can be described as follows. Consider first the smooth quadric in $\P^4$
$$
Q=\lbrace x_1x_2+x_2x_3+x_3x_1+x_4x_5 = 0 \rbrace \subset \P^4
$$
where we denote by $[x_1:x_2:x_3:x_4:x_5]$ the homogeneous coordinates on $\P^4$. We then fix a smooth conic $C=Q\cap \lbrace x_4=x_5=0 \rbrace \subset Q$ and points $P_1=[1:0:0:0:0]$, $P_2=[0:1:0:0:0]$ and $P_3=[0:0:1:0:0]$ in $Q$. Let $Y \to Q$ the blow-up of $Q$ in the three points $(P_i)_{1\leq i\leq 3}$ and $\check C$ the strict transform of $C$ in $Y$. Then, $X$ is obtained as the blow-up of $Y$ along $\check C$. Its automorphism group  satisfies $\Aut_0(X)\simeq \C^*$, where the $\C^*$-action is the lift of the action defined on $Q$ by
$$
\lambda \cdot ([x_1:x_2:x_3:x_4:x_5]) =[\lambda x_1:\lambda x_2:\lambda x_3:\lambda^2 x_4:x_5].
$$
The manifold $X$ also admits an involution which is the lift of the involution $\tau$ defined on $Q$ by 
$$
\tau ([x_1:x_2:x_3:x_4:x_5]) = [x_1:x_2:x_3:x_5:x_4].
$$
We observe that $\tau$ preserves the K\"ahler class associated to the hyperplane section $H\cap Q$ and fixes $C$, as well as the points $P_1$, $P_2$ and $P_3$. Hence,  all the $(1,1)$-classes on $X$ are invariant under the (lifted) involution. As the adjoint action of $\tau$ maps the generator of the $\C^*$-action to its inverse, we conclude as in \ref{cor:BlowupandAdinvarianceFutaCar} the vanishing of the Futaki invariant of $X$ for all its K\"ahler classes.

\section{Hypersurfaces in $\P^2\times\P^2$ and their blow-ups}
In this section, we consider families N\textdegree $\cN$, with $$\cN\in\lbrace 2.24, 3.8, 4.7  \rbrace.$$
\subsection{Family 2.24}
From \cite{SussKE,SussPictureBook,IltenSuss} (see also \cite[Section 4.7]{fanothreefolds}), the only $K$-polystable element in Family 2.24 with infinite automorphism group is given, up to isomorphism, by
$$
X=\lbrace xu^2+yv^2+zw^2 \rbrace \subset \P^2\times\P^2.
$$
It has  $\Aut_0(X)\simeq (\C^*)^2$, where the action of $(\alpha,\beta)\in (\C^*)^2$ is given by
$$
(\alpha,\beta)\cdot([x:y:z],[u:v:w])=([\alpha^2 x:\beta^2 y:z],[\alpha^{-1}u:\beta^{-1}v:w]).
$$
The group $G = \Z/2\Z \times \Z/2\Z$  acts on $\P^2 \times \P^2$, with the action of $(\sigma,\tau) \in G$  generated by 
$$
\sigma([x:y:z],[u:v:w])=([z:y:x],[w:v:u])
$$
and 
$$
\tau([x:y:z],[u:v:w])=([x:z:y],[u:w:v]).
$$
Note that $G\subset \Aut(X)$ and that the inclusion $\iota : X \to \P^2\times\P^2$ is $G$-equivariant. Hence we deduce that $\iota^*[\om_{FS}^i]$ is $G$-invariant, where $\om_{FS}^i$ denote the Fubini--Study metric on the $i$-th factor. Then, any K\"ahler class on $X$ is $G$-invariant. Denote by $v_1$ (resp. $v_2$) the generator of the $\C^*$-action $\alpha\cdot([x:y:z],[u:v:w])=([\alpha^2 x:y:z],[\alpha^{-1}u:v:w])$ (resp. $\beta\cdot([x:y:z],[u:v:w])=([x:\beta^2 y:z],[u:\beta^{-1}v:w])$) on $X$. A direct computation shows 
$$
\left\{ \begin{array}{ccc}
        \Ad_\sigma(v_1) & = & -(v_1+v_2) \\
        \Ad_\tau(v_2) & = & -(v_1+v_2).
       \end{array}
       \right.
$$
Using $\Ad$-invariance of the Futaki invariant, as discussed in Proposition \ref{prop:AdinvarianceFutCar}, we deduce that for any K\"ahler class $\Omega$ on $X$ :
$$
\left\{ \begin{array}{ccc}
        \Fut_{(X,\Omega)}(v_1) & = & -\Fut_{(X,\Omega)}(v_1)-\Fut_{(X,\Omega)}(v_2) \\
         \Fut_{(X,\Omega)}(v_2) & = & -\Fut_{(X,\Omega)}(v_1)-\Fut_{(X,\Omega)}(v_2),
       \end{array}
       \right.
$$
hence $\Fut_X$ is identically zero.

\subsection{Family 3.8}
From \cite[Section 5.16]{fanothreefolds}, the only element in Family 3.8 with infinite automorphism group is given, up to isomorphism, by $X=\mathrm{Bl}_{C}(Y)$ the blow up of $Y$ along the curve $C$, where
$$
Y=\lbrace (vw + u^2 )x + v^2 y + w^2 z = 0 \rbrace \subset \P^2\times \P^2
$$
is a smooth divisor of degree $(1,2)$ and where $C=\pi_1^{-1}([1:0:0])$, with $\pi_1$ the projection onto the first factor of $\P^2\times \P^2$. The variety $Y$ is the only element in Family 2.24 with infinite automorphism group, and 
$$
\Aut(X)\simeq\Aut(Y)\simeq \C^*\rtimes\Z/2\Z.
$$
More explicitly, the $\C^*$-action is for $\lambda\in\C^*$ given by
$$
\lambda\cdot([x:y:z],[u:v:w])=([x:\lambda^{-2}y:\lambda^2z],[\lambda u: \lambda^2 v : w]),
$$
while the $\Z/2\Z$-action is generated by $\tau$:
$$
\tau([x:y:z],[u:v:w])=([x:z:y],[u:w:v]).
$$
Identifying $\lambda$ with the corresponding element in $\Aut(Y)$, we have $\tau\circ\lambda\circ\tau^{-1}=\lambda^{-1}$, so that item $(iii)$ in Corollary \ref{cor:BlowupandAdinvarianceFutaCar} is satisfied. The inclusion $\iota:Y\to \P^2\times\P^2$ is $\tau$-equivariant, and then the classes $\iota^*[\om_{FS}^i]$ are $\tau$-invariant, for $\om_{FS}^i$ the Fubini--Study metric on each factor of $\P^2\times \P^2$. This shows that hypothesis $(ii)$ from Corollary \ref{cor:BlowupandAdinvarianceFutaCar} holds as well. Finally, the curve $C$ is $\tau$-invariant, and by Corollary \ref{cor:BlowupandAdinvarianceFutaCar}, the Futaki character of $X$ is identically zero on its K\"ahler cone.

\subsection{Family 4.7}
Let $X$ be a smooth Fano threefold in family 4.7. Then it is a blow-up of a smooth divisor $W$ of bidegree $(1,1)$ on $\P^2\times \P^2$ along two disjoints curves of bidegrees $(1,0)$ and $(0,1)$, and it is $K$-polystable \cite{SussKE,SussPictureBook,IltenSuss}. To perform computations, we will assume that 
$$
W=\lbrace xu+yv+zw = 0 \rbrace\subset \P^2\times\P^2,
$$
where $[x,y,z]$ and $[u,v,w]$ stand for homogeneous coordinates on the first and second factors respectively. We will denote by $\pi_i : W \to \P^2$ the natural projection on the $i$-th factor. We then let $C_i=\pi_i^{-1}([0:0:1])\subset W$. Then, $X=\Bl_{C_1\cup C_2}(W)$ and from \cite[Lemmas 7.1 and 7.7]{cps19}, we have
$$
\Aut_0(X)\simeq \GL_2(\C).
$$
The isomorphism is defined as follows. First, automorphisms of $X$ are induced by automorphisms of $W$ that leave $C_1\cup C_2$ invariant. Arguing as in \cite[Lemma 7.7]{cps19}, they correspond to lift of isomorphisms of $\P^2$ that leave the set
$$\pi_1(C_1\cup C_2)=\lbrace [0:0:1]\rbrace \cup\lbrace [x: y : 0], (x,y)\in \C^2\setminus \lbrace 0\rbrace \rbrace $$ 
invariant. Those elements are easily identified to elements in $\GL_2(\C)$. From Section \ref{sec:sln}, the Futaki invariant vanishes on the $\sl_2(\C)$-component in $\aut(X)$. We can identify a supplementary subspace of $\sl_2(\C)$ in $\aut(X)$ by considering the lift to $X$ of the generators of the $\C^*$-action on $\P^2$ given by
$$
\lambda\cdot ([x:y:z])=([\lambda x : y: z]).
$$
The lift of this action to $W$ is given by
\begin{equation}
 \label{eq:action4.7}
\lambda\cdot ([x:y:z],[u:v:w])=([\lambda x:y:z],[\lambda^{-1}u:v:w]).
\end{equation}
We introduce the involution
$$
\tau([x:y:z],[u:v:w])=([u:v:w],[x:y:z]).
$$
This preserves $W$, and swaps the curves $C_1$ and $C_2$. It also swaps the $(1,1)$-classes $\pi_1^*[\om_{FS}]$ and 
$\pi_2^*[\om_{FS}]$. Finally, its adjoint actions maps a generator of the $\C^*$-action (\ref{eq:action4.7}) to its inverse. Then, following Section \ref{sec:prelim}, we deduce the vanishing of the Futaki invariant on $X$ for any K\"ahler class of the form
$$
c_1(X)+\ep \pi^*(\pi_1^*[\om_{FS}]+\pi_2^*[\om_{FS}])+\eta (c_1(\cO(E_1))+c_1(\cO(E_2))),
$$
where $\pi : X \to W$ denotes the blow-down map, $E_1$ and $E_2$ the exceptional divisors, and $(\ep,\eta)\in\R^2$ are chosen so that the class is positive.

\section{Remaining cases}
We finish with families N\textdegree $\cN$, with $$\cN\in\lbrace 2.20, 3.9, 3.13, 4.2  \rbrace.$$

\subsection{Family 2.20}

Consider the Pl\"ucker embedding of $\Gr(2,5)$ in $\P^9$. Any smooth intersection of this embedded sixfold with a linear subspace of codimension 3 is a Fano manifold. We call this Fano threefold $V_5$ and it is the unique member of family 1.15 of Fano threefolds.

Now, let $C$ be a twisted cubic in $V_5$ and let $X=\Bl_C( V_5)$. Then $X$ is a member of the family 2.20 of Fano threefolds. Up to isomorphism, there is a unique choice of curve such that $X$ has infinite automorphism group \cite[Lemma 6.10]{cps19}. In this case, $\Aut(X)$ is a semidirect product $\C^*\rtimes\Z/2\Z$.

In \cite[Section 5.8]{fanothreefolds}, it is shown that the unique element in family 2.20 with infinite automorphism group is $K$-polystable. Moreover, the following explicit description of $X$ is given. First, $V_5$ can be realised as the subvariety of $\P^6$ cut out by the equations 
$$
\left\{
\begin{array}{lllll}
x_4x_5-x_0x_2+x_1^2 = 0 \\
x_4x_6-x_1x_3+x_2^2=0 \\
x_4^2 - x_0x_3+x_1x_2 = 0 \\
x_1x_4 -x_0x_6 -x_2x_5 = 0 \\
x_2x_4 -x_3x_5 -x_1x_6 = 0.
\end{array}
\right.
$$
We will then identify $V_5$ with this variety. Then, we can chose $C$ to be the twisted cubic parametrised by 
$$
([r:s]) \mapsto ([r^3 : r^2s:rs^2 : s^3 : 0 : 0 : 0])\in V_5.
$$
We consider $X=\Bl_C(V_5)$ with this parametrisation. The  $\C^*\rtimes\Z/2\Z$-action on $\P^6$ generated by 
$$
\lambda\cdot([x_0:x_1:x_2:x_3:x_4:x_5:x_6])= 
[\lambda^3x_0:\lambda^5x_1:\lambda^7x_2:\lambda^9x_3:\lambda^6x_4:\lambda^4x_5:\lambda^8 x_6]
$$
for $\lambda \in \C^*$ and the involution
$$
\tau([x_0:x_1:x_2:x_3:x_4:x_5:x_6])=[x_3:x_2:x_1:x_0:x_4:x_6:x_5]
$$
preserves $V_5$ and $C$, hence lifts to $X$. This provides the isomorphism $$\Aut(X)\simeq \C^*\rtimes\Z/2\Z.$$
Note that conjugation by $\tau$ sends a generator of the $\C^*$-action to its inverse. As $H^{1,1}(V_5,\R)$ is generated by the class of a hyperplane section in $\P^6$, and as the class of the Fubini--Study metric on $\P^6$ is $\tau$-invariant, we can apply Corollary \ref{cor:BlowupandAdinvarianceFutaCar} to $X$, and we deduce the vanishing of the Futaki invariant on the K\"ahler cone of $X$.

\subsection{Families 3.9 and 4.2}
We now consider families 3.9 and 4.2. Any member of one of these families have $\Aut_0(X)\simeq \C^*$ and is $K$-polystable by \cite[Section 4.6]{fanothreefolds}, which we will follow closely. 

Let 
$\cS$ be either $\P^2$ or $\P^1\times \P^1$ and let $\cC\subset \cS$ be a smooth irreducible curve given by a quartic if $\cS=\P^2$ and a $(2,2)$-curve in the other case. Denote by $\mathrm{pr}_i$ the projection of $\P^1\times \cS$ onto the $i$-th factor. We then set $\cB=\mathrm{pr}_2^*(\cC)\simeq \P^1 \times \cC$, $\cE=\mathrm{pr}_1^*([1:0])$ and $\cE'=\mathrm{pr}_1^*([0:1])$. We consider 
$$G=\C^*\rtimes\Z/2\Z$$
acting on $\P^1$ by 
$$\lambda\cdot[u:v]=[u:\lambda v]$$
and 
$$\tau([u:v])=[v:u].$$
The $G$-action lifts to $\P^1\times \cS$, with the involution $\tau$ swapping $\cE$ and $\cE'$. We then introduce $\eta : W \to \P^1\times \cS$ a double cover branched over $\cE+\cE'+\cB$, and $\overline{E}$, $\overline{E}'$ and $\overline{B}$ the preimages on $W$ of the surfaces $\cE$, $\cE'$ and $\cB$ respectively. Then, set $\hat X \to W$ the blow-up of $W$ along the curves $\overline{E}\cap\overline{B}$ and $\overline{E}'\cap\overline{B}$ with exceptional surfaces $\hat S$ and $\hat S'$. We denote the proper transforms of $\overline{E}$, $\overline{E}'$ and $\overline{B}$  by $\hat E$, $\hat E'$, $\hat B$  respectively.  Finally, $X$ is obtained as the image of a contraction $\hat X \to X$ of $\hat B$ to a curve isomorphic to $\cC$. We set $E$, $E'$, $S$ and $S'$ the proper transforms on $X$ of $\hat E$, $\hat E'$, $\hat S$ and $\hat S'$ respectively. 

One can check that all the birational maps involved in producing $X$ are $G$-equivariant, and we obtain $\Aut_0(X)\simeq \C^*$. Moreover, the involution on $X$ induced by $\tau$ (that we will still denote $\tau$) swaps $E$ and $E'$, and also swaps $S$ and $S'$. Hence, the K\"ahler classes $c_1(E)+c_1(E')$ and $c_1(S)+c_1(S')$ are both $\tau$-invariant. Clearly, on $\P^1\times \cS$, the adjoint action of the involution $\tau$ maps a generator of the $\C^*$-action to its inverse. This remains true on  $W$ by equivariance, and thus on $X$ that is birationally equivalent to $W$ by continuity of holomorphic vector fields away from the exceptional loci. Then, Proposition \ref{prop:AdinvarianceFutCar} applies to show that the Futaki invariant of $X$ vanishes in any K\"ahler class of the form 
$$
c_1(X)+\ep(c_1(E)+c_1(E'))+\delta(c_1(S)+c_1(S')),
$$
where $(\ep,\delta)\in\R^2$ is chosen so that the class is positive. 

To understand the subset of the K\"ahler cone these classes generate, we use the following alternative description of $X$, still following \cite[Section 4.6]{fanothreefolds}. 
\subsubsection{Family 3.9}
This is the case when $\cS=\P^2$. $X$ can then also be obtained as the blow-up $\phi : X \to V$ of $V$ along a curve $C\subset V$ where 
$$\pi : V=\P(\cO\oplus \cO(2))\to \P^2=\cS$$
is a $\P^1$-bundle, and $C=\pi^*\cC\cap E_V$, where $E_V$ is the zero section of $\pi$. We also have that the strict transform of $E_V$ (resp. of the infinity section $E'_V$, and of $\pi^*\cC$) on $X$ is $E$ (resp. $E'$ and $S'$), while the exceptional divisor of $\phi$ is $S$. Hence we get the relation $c_1(E)+c_1(E')=0$ in this case (but $c_1(S)+c_1(S')\neq 0$), and we obtain a $2$-dimensional family of classes that admit cscK metrics given by 
$$
(\delta,r) \to r(c_1(X)+\delta(c_1(S)+c_1(S'))).
$$
\subsubsection{Family 4.2}
This is the case when $\cS=\P^1\times\P^1$. Again, we can recover $X$ from the maps 
$$
\pi : X \to V
$$
and 
$$
\phi : V \to \cS,
$$
with $\pi$ the contraction of $S$ to a curve isomorphic to $\cC$ and $\phi$ a $\P^1$-bundle over $\P^1\times \P^1$. According to \cite[Section 10]{Fujita2016}, we have
$$
\mathrm{Pic}(X)=\Z[H_1]\oplus\Z[H_2]\oplus \Z[E]\oplus\Z[E']
$$
where $H_i=(\pi\circ\phi)^*(\ell_i)$ and $\ell_1, \ell_2$ denote two different rulings of $\P^1\times\P^1$. There are relations $ -K_X\sim 2(H_1+H_2)+E+E'$, $S\sim H_1+H_2 -E+E'$ and $S'\sim H_1+H_2 +E-E'$, so that the K\"ahler classes described above can be written
$$
2(1+\delta)(c_1(H_1)+c_1(H_2))+(1+\ep)(c_1(E)+c_1(E')).
$$
Together with scaling we therefore obtain a $3$-dimensional family of classes with vanishing Futaki invariant.

\subsection{Family 3.13}
Let $X$ be a smooth $K$-polystable Fano threefold in family 3.13. From \cite[Section 5.19]{fanothreefolds}, 
either  $\Aut_0(X)\simeq \mathrm{PGL}_2(\C)$, and so from Section \ref{sec:sln} the Futaki invariant vanishes identically, or $\Aut_0(X)\simeq\C^*$. In the latter case, denoting $[x_0:x_1:x_2]$, $[y_0:y_1:y_2]$ and $[z_0:z_1:z_3]$ the homogeneous coordinates on the first, second and third factors of $\P^2\times\P^2\times\P^2$, $X$ is given by the equations 
$$
\left\{
\begin{array}{ccc}
 x_0y_0+x_1y_1+x_2y_2 & = & 0\\
 y_0z_0+y_1z_1+y_2z_2 & = &0 \\
 (1 + s)x_0 z_1 + (1 -s)x_1 z_0 -2x_2 z_2 & = & 0
\end{array}
\right.
$$
in $\P^2\times\P^2\times\P^2$, for $s\notin\lbrace -1,0,1 \rbrace$, and
$$
\Aut(X)\simeq\C^*\rtimes\mathfrak{S}_3.
$$
The $\C^*$-action for $\lambda\in\C^*$ is given on a point $P$ with homogeneous coordinates $([x_0:x_1:x_2], [y_0:y_1:y_2],[z_0:z_1:z_3])$ by 
$$\lambda\cdot(P)=
([\lambda x_0:\lambda^{-1}x_1:x_2], [\lambda^{-1}y_0:\lambda y_1:y_2],[\lambda z_0:\lambda^{-1}z_1:z_3]).
$$
Further, there are two involutions $\tau_{x,z}$ and $\tau_{y,z}$ in $\Aut(X)$, whose actions are given by 
$$
\tau_{x,z}(P)=([z_1 : z_0 : z_2 ], [y_1 : y_0 : y_2 ], [x_1 : x_0 : x_2 ])
$$
and 
$$
\tau_{y,z}(P)=([x_1 : x_0 : -x_2],[(1-s)z_0:(1+s)z_1:2z_2],[\frac{y_0}{1-s} :\frac{y_1}{1+s} : \frac{y_2}{2}]).
$$

Note that $\tau_{x,z}\circ\lambda\circ\tau_{x,z}^{-1}=\lambda^{-1}$ and $\tau_{y,z}\circ\lambda\circ\tau_{y,z}^{-1}=\lambda^{-1}$ (where we identified $\lambda$ with the corresponding element in $\Aut(X)$). From \cite[Diagram 5.19.1]{fanothreefolds}, the projection maps $\eta_x,\eta_y,\eta_z : \P^2\times\P^2\times\P^2 \to \P^2$ induce holomorphic maps, still denoted $\eta_x, \eta_y$ and $\eta_z$, from $X$ to $\P^2$.
If we denote $\alpha_i:= \eta_i^*[\om_{FS}]\in H^{1,1}(X,\R)$ the pullback of the class of the Fubini--Study form, for $i\in\lbrace x,y,z\rbrace$, by equivariance of the projections, we see that $\alpha_y$ is $\tau_{x,z}$-invariant while $\alpha_x$ is $\tau_{y,z}$-invariant. Hence, for any $\ep>0$ small enough, the class $c_1(X)+\ep \alpha_x$ is $\tau_{y,z}$-invariant and the class $c_1(X)+\ep\alpha_y$ is $\tau_{x,z}$-invariant. From Proposition \ref{prop:AdinvarianceFutCar}, the Futaki invariants of $(X,c_1(X)+\ep \alpha_x)$ and $(X,c_1(X)+\ep \alpha_y)$ vanish. Hence, $X$ will carry cscK deformations of its K\"ahler--Einstein metrics in the classes $c_1(X)+\ep \alpha_y$ and $c_1(X)+\ep \alpha_x$ for $\ep$ small enough by LeBrun--Simanca's openness theorem.
\begin{remark}
We have used two different involutions $\tau_{x,z}$ and $\tau_{y,z}$ in the above to the deduce the vanishing of the Futaki invariant in the classes  $c_1(X)+\ep \alpha_y$ and $c_1(X)+\ep \alpha_x$. We are therefore not able from these arguments to deduce that the Futaki invariant vanishes on the sums of these classes. Hence we still only get a $2$-dimensional family of classes with vanishing Futaki invariant in this case.
\end{remark}

\bibliography{biblib}

\newcommand{\etalchar}[1]{$^{#1}$}
\begin{thebibliography}{ACC{\etalchar{+}}21}

\bibitem[ACC{\etalchar{+}}21]{fanothreefolds}
Carolina Araujo, Ana-Maria Castravet, Ivan~A. Cheltsov, Kento Fujita,
  Anne-Sophie Kaloghiros, Jesus Martinez-Garcia, Constantin~A. Shramov, Hendrik
  S\"u{\ss}, and Nivedita Viswanathan.
\newblock {The Calabi Problem for Fano threefolds}.
\newblock {\em MPIM Preprint Series}, 31, 2021.

\bibitem[BS99]{BatSel}
Victor~V. Batyrev and Elena~N. Selivanova.
\newblock Einstein-{K{\"a}hler} metrics on symmetric toric {Fano} manifolds.
\newblock {\em J. Reine Angew. Math.}, 512:225--236, 1999.

\bibitem[Cal82]{Calabi}
Eugenio Calabi.
\newblock Extremal {K{\"a}hler} metrics.
\newblock Semin. differential geometry, {Ann}. {Math}. {Stud}. 102, 259-290
  (1982)., 1982.

\bibitem[Cal85]{CalabiII}
Eugenio Calabi.
\newblock Extremal {K{\"a}hler} metrics. {II}.
\newblock Differential geometry and complex analysis, {Vol}. dedic. {H}. {E}.
  {Rauch}, 95-114 (1985)., 1985.

\bibitem[CDS15]{CDS}
Xiuxiong Chen, Simon~K. Donaldson, and Song Sun.
\newblock K{\"a}hler-{Einstein} metrics on {Fano} manifolds. {I}, {II}, {III}.
\newblock {\em J. Am. Math. Soc.}, 28(1):183--197, 199--234, 235--278, 2015.

\bibitem[CP22]{cheltsovpark22}
Ivan~A. Cheltsov and Jihun Park.
\newblock K-stable {F}ano threefolds of rank 2 and degree 30.
\newblock {\em Eur. J. Math.}, 8(3):834--852, 2022.

\bibitem[Don02]{DonaldsonToric}
Simon~K. Donaldson.
\newblock Scalar curvature and stability of toric varieties.
\newblock {\em J. Differ. Geom.}, 62(2):289--349, 2002.

\bibitem[Fuj16]{Fujita2016}
Kento Fujita.
\newblock On {{\(K\)}}-stability and the volume functions of
  {{\(\mathbb{Q}\)}}-{Fano} varieties.
\newblock {\em Proc. Lond. Math. Soc. (3)}, 113(5):541--582, 2016.

\bibitem[Fut83]{Futaki83}
Akito Futaki.
\newblock An obstruction to the existence of {Einstein} {Kaehler} metrics.
\newblock {\em Invent. Math.}, 73:437--443, 1983.

\bibitem[Fut88]{FutBook}
Akito Futaki.
\newblock {\em K{\"a}hler-{Einstein} metrics and integral invariants}, volume
  1314 of {\em Lect. Notes Math.}
\newblock Berlin etc.: Springer-Verlag, 1988.

\bibitem[Gau]{gauduchon}
Paul Gauduchon.
\newblock Calabi's extremal k{\"a}hler metrics: An elementary introduction.

\bibitem[IS17]{IltenSuss}
Nathan Ilten and Hendrik S{\"u}{\ss}.
\newblock K-stability for {Fano} manifolds with torus action of complexity 1.
\newblock {\em Duke Math. J.}, 166(1):177--204, 2017.

\bibitem[LS93]{LeSim93}
Claude LeBrun and Santiago~R. Simanca.
\newblock On the {K{\"a}hler} classes of extremal metrics.
\newblock In {\em Geometry and global analysis. Report of the 1st MSJ
  (Mathematical Society of Japan) International Research Institute, Sendai,
  Japan, July 12--23, 1993}, pages 255--271. Sendai: T{\^o}hoku Univ.,
  Mathematical Institute, 1993.

\bibitem[LS94]{LeSim94}
Claude LeBrun and Santiago~R. Simanca.
\newblock Extremal {K{\"a}hler} metrics and complex deformation theory.
\newblock {\em Geom. Funct. Anal.}, 4(3):298--336, 1994.

\bibitem[Mat57]{matsushima57}
Yozô Matsushima.
\newblock Sur la structure du groupe d’homéomorphismes analytiques d’une
  certaine variété kaehlérinne.
\newblock {\em Nagoya Mathematical Journal}, 11:145–150, 1957.

\bibitem[OSY23]{OSY}
Hajime Ono, Yuji Sano, and Naoto Yotsutani.
\newblock Bott manifolds with vanishing {Futaki} invariant for all {K{\"a}hler}
  classes.
\newblock {\em ArXiv preprint 2305.05924}, 2023.

\bibitem[PCS19]{cps19}
Victor~V. Przyjalkowski, Ivan~A. Cheltsov, and Constantin~A. Shramov.
\newblock Fano threefolds with infinite automorphism groups.
\newblock {\em Izv. Ross. Akad. Nauk Ser. Mat.}, 83(4):226--280, 2019.

\bibitem[S{\"u}{\ss}13]{SussKE}
Hendrik S{\"u}{\ss}.
\newblock K{\"a}hler-{Einstein} metrics on symmetric {Fano}
  {{\(T\)}}-varieties.
\newblock {\em Adv. Math.}, 246:100--113, 2013.

\bibitem[S{\"u}{\ss}14]{SussPictureBook}
Hendrik S{\"u}{\ss}.
\newblock Fano threefolds with 2-torus action.
\newblock {\em Doc. Math.}, 19:905--940, 2014.

\bibitem[Tia90]{Tian90}
Gang Tian.
\newblock On {Calabi}'s conjecture for complex surfaces with positive first
  {Chern} class.
\newblock {\em Invent. Math.}, 101(1):101--172, 1990.

\bibitem[Tia97]{Tian97}
Gang Tian.
\newblock K{\"a}hler-{Einstein} metrics with positive scalar curvature.
\newblock {\em Invent. Math.}, 130(1):1--37, 1997.

\bibitem[TY87]{TianYau87}
Gang Tian and Shing-Tung Yau.
\newblock K{\"a}hler-{Einstein} metrics on complex surfaces with {{\(C_
  1>0\)}}.
\newblock {\em Commun. Math. Phys.}, 112:175--203, 1987.

\bibitem[WZ04]{WangZhu}
Xu-Jia Wang and Xiaohua Zhu.
\newblock K{\"a}hler--{Ricci} solitons on toric manifolds with positive first
  {Chern} class.
\newblock {\em Adv. Math.}, 188(1):87--103, 2004.

\bibitem[WZ11]{WangZhou}
Xu-jia Wang and Bin Zhou.
\newblock On the existence and nonexistence of extremal metrics on toric
  {K{\"a}hler} surfaces.
\newblock {\em Adv. Math.}, 226(5):4429--4455, 2011.

\bibitem[Yau93]{YauOpenPb}
Shing-Tung Yau.
\newblock Open problems in geometry.
\newblock In {\em Differential geometry. Part 1: Partial differential equations
  on manifolds. Proceedings of a summer research institute, held at the
  University of California, Los Angeles, CA, USA, July 8-28, 1990}, pages
  1--28. Providence, RI: American Mathematical Society, 1993.

\end{thebibliography}
\bibliographystyle{alpha}

\end{document}